\documentclass[letterpaper,10pt,conference,romanappendices]{ieeeconf}
\IEEEoverridecommandlockouts                              



\usepackage{graphics} 
\usepackage{graphicx}
\usepackage{epstopdf} 
\usepackage{times} 
\usepackage{amsmath} 
\usepackage{amssymb}  
\usepackage{theorem}
\usepackage{color,xspace}
\usepackage{hyperref}
\usepackage{graphicx}
\usepackage{subcaption}
\usepackage{cite}
\usepackage{cleveref}
\usepackage{bm}
\usepackage{bbm}
\usepackage{algorithm}
\usepackage{algpseudocode}
\usepackage{enumerate}

\input{mysymbol.sty}

\makeatletter

\newtheorem{theorem}{Theorem}
\newtheorem{lemma}{Lemma}	

\newtheorem{remark}{Remark}
	
\newtheorem{corollary}{Corollary}
	
\newtheorem{assumption}{Assumption}	
\newtheorem{problem}{Problem}

\def\bx{\mathbf{x}}
\def\bh{\mathbf{h}}
\def\bg{\mathbf{g}}

\def\by{\mathbf{y}}

\def\bbx{{\ensuremath{\mathbf x}}}

\newcommand*{\Scale}[2][4]{\scalebox{#1}{$#2$}}%
\newcommand{\oprocendsymbol}{\hbox{$\bullet$}}
\newcommand{\oprocend}{\relax\ifmmode\else\unskip\hfill\fi\oprocendsymbol}

\title{\LARGE Self-Triggered Time-Varying Convex Optimization\bf }
\author{Mahyar Fazlyab, Cameron Nowzari, George J. Pappas, Alejandro Ribeiro, Victor M. Preciado
	\thanks{The authors are with the Department of Electrical and Systems Engineering, University of Pennsylvania, Philadelphia, PA 19104, USA. Email: \{mahyarfa, cnowzari, pappasg, aribeiro, preciado\}@seas.upenn.edu.}}

\begin{document}

\maketitle
\thispagestyle{empty}
\pagestyle{empty}

\begin{abstract}
	In this paper, we propose a self-triggered algorithm to solve a
	class of convex optimization problems with time-varying objective functions. It is known that the trajectory of the optimal
	solution can be asymptotically tracked by a continuous-time state update law.
	Unfortunately, implementing this requires continuous evaluation
	of the gradient and the inverse Hessian of the objective function which is not amenable to
	digital implementation. Alternatively, we draw inspiration from
	self-triggered control to propose a strategy that autonomously
	adapts the times at which it makes computations about the objective
	function, yielding a piece-wise affine state update law. The algorithm
	does so by predicting the temporal evolution of the gradient using
	known upper bounds on higher order derivatives of the objective
	function. Our proposed method guarantees convergence to arbitrarily small neighborhood of the optimal trajectory in finite time and 
	without incurring Zeno behavior. We illustrate our framework with numerical simulations.
\end{abstract}

\begin{keywords}
	Time-varying optimization, self-triggered control, adaptive step size
\end{keywords}


\section{Introduction}

In this paper, we address a class of time-varying optimization problems where
the goal is to asymptotically track a unique, time-varying optimal trajectory given by
\begin{align}\label{eq:intro}
\bx^{\star}(t):=\underset{\bx \in \mathbb{R}^n}{\argmin} \quad f_{0}(\bx,t),\  t\in \mathbb{R}_{+}.
\end{align}
Problems of this form are generally referred to as time-varying optimization or parametric programming in the literature, and often arise in dynamical systems that involve an objective function or a set of constraints that have a dependence on time or a dynamic parameter, in general.
Particular examples include fast model predictive control using online convex optimization \cite{wang2010fast}, real time convex optimization in signal processing \cite{mattingley2010real},  distributed optimization of time-varying functions \cite{rahili2015distributed}, time-varying pose estimation \cite{baumann2004newton}, traffic engineering in computer networks \cite{su2009traffic}, neural network learning \cite{641452,716979}, and dynamic density coverage for mobile robots \cite{7050337}.

From an optimization perspective, a general framework for solving problem \eqref{eq:intro} is to sample the objective function at particular times of interest, and solve the corresponding sequence of stationary optimization problems by standard iterative algorithms such as gradient or Newton's methods. However, these algorithms clearly ignore the dynamic aspect of the problem which means they yield solutions with a final steady-state error whose magnitude is related to the time-varying aspects of the problem \cite{popkov2005gradient}.

From a dynamical systems perspective, one could perform time sensitivity analysis of the optimal solution to propose a continuous-time dynamical system whose state is asymptotically driven to the optimal solution \cite{716979,baumann2008newton}. The resulting dynamics is a combination of standard descent methods and a prediction term which tracks the drift in the optimal solution. For error-free tracking, however, we need to solve the dynamics continuously, implying that we need continuous access to the objective function and all of its derivatives that appear in the continuous-time dynamics. A natural solution to this is to implement the
continuous-time dynamics periodically. In a recent work \cite{simonetto2015class}, the authors proposed a periodic sampling strategy in which the objective function is periodically sampled with  a constant period $h>0$, and a single step of prediction along with multiple iterations of standard gradient or Newton's algorithm are combined to achieve an asymptotic error bound that depends on $h$ and the number of descent steps taken between the sampling times.

Instead, we are interested in utilizing self-triggered control strategies \cite{4303247,5411835,SA-CN-GJP:15,CN-JC:13-tac}  to adaptively determine when samples of the objective function are needed without sacrificing the convergence; see~\cite{WPMHH-KHJ-PT:12} for a survey.  From a dynamical systems perspective, this strategy plays a similar role as step size selection in stationary optimization, where a proper continuous-time dynamics $(\dot{\bx}(t)=-\nabla_{\bx}f(\bx(t))$ for instance) is discretized aperiodically using a backtracking line search method \cite{boyd2004convex}. In time-varying optimization, however, the line search method is no longer applicable as time and space become entangled. In this context, we can view our self-triggered sampling strategy as a way of adaptively choosing a proper step size in both time and space together.
There are similar works that propose event-triggered broadcasting strategies to solve static
distributed optimization problem~\cite{PW-MDL:09,MZ-CGC:10,DR-JC:14-cdc,SSK-JC-SM:15-auto},
but to the knowledge of the authors, no work has been reported on an aperiodic discretization of continuous time-varying optimization problems.

\medskip
\emph{Statement of contributions:} 
In this work we are interested in developing a real-time algorithm that can asymptotically
track the time-varying solution~$\bx^\star(t)$ to a time-varying optimization problem.
Our starting point is the availability of a continuous-time dynamics $\dot{\bx}(t) = \bh(\bx(t),t)$
such that the solutions to this satisfy $\| \bx(t) - \bx^\star(t)  \| \rightarrow 0$ as
$t \rightarrow \infty$. Then, we are interested in a real-time implementation such
that $\dot{\bx}(t)$ is to be updated at discrete instants of time and is held constant between updates. In contrast to standard methods that consider periodic samples, our contribution is the development of a self-triggered control strategy that autonomously determines how often $\dot{\bx}(t)$ should be updated. Intuitively, the self-triggered strategy determines
how long the current control input can be applied without negatively affecting the convergence.
Our algorithm guarantees that the state $\bx(t)$ can asymptotically track an arbitrarily small neighborhood around~$\bx^\star(t)$ while ensuring Zeno behavior is avoided. Simulations illustrate our results.

\emph{Notation} Let $\mathbb{R}$, $\mathbb{R}_{+}$, and $\mathbb{R}_{++}$ be the set of real, nonnegative, and strictly positive real numbers. $\mathbb{Z}_{+}$ and $\mathbb{Z}_{++}$ denote nonnegative and positive integers, respectively. $\mathbb{R}^n$ is the space of $n$-dimensional vectors and $\mathbb{S}^n$ is the space of $n$ by $n$ symmetric matrices. The one-norm and two-norm of $\bx \in \mathbb{R}^n$ is denoted by $\|\bx\|_1$ and $\|\bx\|_2$, respectively. The gradient of the function $f(\bx,t) \colon \mathbb{R}^n \times \mathbb{R}_{+} \to \mathbb{R}$ with respect to $\bx \in \mathbb{R}^n$ is denoted by $\nabla_{\bx} f(\bx,t) \colon \mathbb{R}^n \times \mathbb{R}_{+} \to \mathbb{R}^n$. The \emph{partial} derivatives of $\nabla_{\bx} f(\bx,t)$ with respect to $\bx$ and $t$ are denoted by $\nabla_{\bx\bx} f(\bx,t) \colon \mathbb{R}^n \times \mathbb{R}_{+} \to \mathbb{S}^n$ and $\nabla_{\bx t} f(\bx,t) \colon \mathbb{R}^n \times \mathbb{R}_{+} \to \mathbb{R}^n$, respectively. Higher order derivatives are also defined similarly.


\section{Preliminaries and Problem Statement} \label{Section: Problem Statement}
Let $\bbx \in \reals^n$ be a decision variable, $t \in \reals_{+}$ a time index, and $f \colon \reals^n \times \reals_{+} \to \reals$ a real-valued convex function taking values $f(\bbx, t)$. We interpret $f$ as a time-varying objective and consider the corresponding time-varying optimization problem in which we want to find the argument $\bbx^*(t)$ that minimizes the objective $f(\bbx, t)$ at time $t$,
Consider the function $f \colon \mathbb{R}^n \times \mathbb{R}_{+} \to \mathbb{R}$, and define the following optimization problem,
\begin{alignat}{2} 
	\bx^{\star}(t):=\underset{\bx \in \mathbb{R}^n}{\argmin} \quad f(\bx,t),  \label{eq: unconstrained_time_varying_problem}
\end{alignat} 
We impose the following assumption on $f(\bx,t)$.
\begin{assumption} \label{strong convexity}
	The objective function $f(\bx,t)$ is uniformly strongly convex in $\bx$, i.e., $f(\bx,t)$ satisfies $\nabla_{\bx\bx} f(\bx,t) \geq m \mathbf{I}_n$ for some $m>0$, and for all $t \in \mathbb{R}_{+}$.
\end{assumption}
By virtue of Assumption \ref{strong convexity}, $\bx^\star(t)$ is unique for each $t\in \mathbb{R}_{+}$ \cite{boyd2004convex}. \ifx If we apply standard iterative methods such as gradient or Newton's method for solving \eqref{eq: unconstrained_time_varying_problem}, the generated trajectory would lie within a distance of the optimal trajectory whose magnitude depends on the temporal variations of the objective function. More precisely, these iterative algorithms fail to achieve error-free tracking. \fi
The optimal trajectory {$\bx^{\star}(t)$} is then implicitly characterized by $\nabla_{\bx} f(\bx^{\star}(t),t)= 0$ for all $t \in \mathbb{R}_{+}$. Using the chain rule to differentiate this identity with respect to time yields 
\begin{alignat}{2}
	\Scale[0.96]{\dfrac{d}{dt} \nabla_{\bx} f(\bx^{\star}(t),t)= \nabla_{\bx\bx} f(\bx^{\star}(t),t) \dfrac{d}{dt}\bx^{\star}(t)+\nabla_{\bx t} f(\bx^{\star}(t),t)}.
\end{alignat}
%
Since the left-hand side of the above equation is identically zero for all $t \in \mathbb{R}_{+}$, it follows that the optimal point obeys the dynamics
\begin{align} \label{eq: optimal_point_velocity}
	\dfrac{d}{dt}\bx^{\star}(t) =-\nabla_{\bx\bx}^{-1} f(\bx^{\star}(t),t)\nabla_{\bx t} f(\bx^{\star}(t),t).
\end{align}
The above dynamics suggests that the optimizer needs to follow the minimizer with the same dynamics, in addition to taking a descent direction in order to decrease the suboptimality. Choosing Newton's method as a descent direction yields the following continuous-time dynamics,
\begin{align} \label{eq: time_varying_newton}
	\dfrac{d}{dt}\bx(t)=\bh(\bx(t),t),
\end{align}
where the vector field $\bh: \mathbb{R}^n \times \mathbb{R}_{+} \to \mathbb{R}^n$ is given by,
\begin{align} \label{eq: descent_vector_field}
\bh(\bx,t)=-{\nabla_{\bx\bx}^{-1} f(\bx,t)}[\alpha\nabla_{\bx} f(\bx,t)+\nabla_{\bx t} f(\bx,t)],
\end{align}
Here $\alpha \in \mathbb{R}_{++}$ is arbitrary. An appropriate Lyapunov function for \eqref{eq: time_varying_newton} is
\begin{align} \label{eq: Lyapunov function}
V(\bx,t):= \dfrac{1}{2} \|\nabla_{\bx} f(\bx,t)\|_2^2,
\end{align}
which is zero along the optimal path, i.e., $V(\bx^\star(t),t)=0,\ t\geq 0$. It can be verified that under the continuous-time dynamics \eqref{eq: time_varying_newton}, the Lyapunov function evaluated at $(\bx(t),t)$ satisfies the ODE 
\begin{align} \label{eq: Lyapunov ODE}
\dot{V}(\bx(t),t) = -2\alpha V(\bx(t),t).
\end{align}
Solving the latter ODE yields the closed form solution $V(\bx(t),t)=V(\bx(t_0), t_0) \exp(-2\alpha (t-t_0))$, where $\bx(t_0) \in \mathbb{R}^n$ is the initial point, and $t_0 \in \mathbb{R}_{+}$ is the initial time assumed to be zero. This implies that exponential convergence of $\bx(t)$ to $\bx^\star(t)$ requires continuous evaluation of the gradient and the inverse Hessian of the objective function, according to \eqref{eq: time_varying_newton} and \eqref{eq: descent_vector_field}, which is computationally expensive and is not amenable to digital implementation. Instead, we can use a simple Euler method to discretize \eqref{eq: time_varying_newton}. More precisely, suppose we use a sequence of periodic sampling times $\{t_k\}_{k\in \mathbb{Z}_{++}}$ with period $\tau>0$, i.e., $t_{k+1}-t_k=\tau$ for any $k\in \mathbb{Z}_{+}$ to arrive at the following piece-wise affine state update law,
\begin{align} \label{eq: discretized dynamics}
\dfrac{d}{dt}\hat{\bx}(t)=\bh(\hat{\bx}(t_k),t_k) ,\ t \in [t_k,t_{k+1}).
\end{align}
where $\hat{\bx}(t_k)$ is the estimate of $\bx(t_k)$ obtained from the ideal dynamics \eqref{eq: time_varying_newton}. Now if the vector field $\bh(\bx,t)$ is uniformly Lipschitz in $\bx$, i.e., for all $\bx,\by \in \mathbb{R}^n$, we have the inequality $\|\bh(\bx,t)-\bh(\by,t)\| \leq L\|\bx-\by\|$ for some $L>0$, and that the initial condition satisfies $\hat{\bx}(t_0) =\bx(t_0)$, the discretization error at time $t_k$ would satisfy the bound \cite{iserles2009first}
\begin{align} \label{eq: global truncation error}
\|\hat{\bx}(t_k) - \bx(t_k)\|_2 \leq \dfrac{c\tau}{L}\left[(1+\tau L)^k-1\right], \quad k\in \mathbb{Z}_{+}.
\end{align}
The above inequality implies that $\|\hat{\bx}(t_k) - \bx(t_k)\| \to 0$ as $\tau\to 0 $, i.e., we can control only the order of magnitude of the discretization error by $\tau$. Moreover, this upper bound is very crude and is of no use for quantifying the suboptimality of $\hat{\bx}(t_k)$ \cite{iserles2009first}. Motivated by this observation, we are interested in a sampling strategy that autonomously adapts the times at which it makes computations about the objective function in order to control the suboptimality. We formalize the problem next.
\begin{problem} \label{problem}
Given the dynamics~\eqref{eq: discretized dynamics}, find a strategy that determines the least frequent sequence of sampling times $\{t_k\}_{k\in \mathbb{Z}_{++}}$ such that:
\begin{enumerate}[(i)]
	\item for each $k\in\mathbb{Z}_{+}$, $t_{k+1}$ is determined without having access to the objective function for $t> t_k$,
	\item $\hat{\bbx}(t)$ converges to any neighborhood of the optimal trajectory after a finite number of samples, and remains there forever, and
	\item $t_{k+1}-t_{k} > c >0$ for some $c \in \mathbb{R}_{++}$ and all $k \in \mathbb{Z}_{++}$.
\end{enumerate}
\end{problem}
The first property guarantees that the proposed method is completely online. The second property enables the optimizer to arbitrarily bound the discretization error. The last property ensures Zeno behavior is avoided. In order to develop the main results, we make the following Assumption about the objective function.
\begin{assumption}\label{bounds on second-order derivatives} 
	The objective function $f(\bx,t)$ is thrice continuously differentiable. Furthermore,
	\begin{itemize}
		\item[\emph{(i)}] The second-order derivatives are bounded by known positive constants, i.e.,
		$$
		\|\nabla_{\bx\bx} f(\bx,t)\|_2 \leq C_{xx}, \quad  \|\nabla_{\bx t} f(\bx,t)\|_2 \leq C_{xt}.
		$$
		\item[\emph{(ii)}] The third-order derivatives are bounded by known positive constants, i.e.,
		$$
		\|\nabla_{\bx\bx t} f(\bx,t)\|_2 \leq C_{xxt}, \quad \|\nabla_{\bx t t} f(\bx,t)\|_2 \leq C_{xtt}.
		$$
		\begin{align*}
		\|\nabla_{\bx\bx\bx_i} f(\bx,t)\|_2 \leq C_{xxx},\ i \in [n].
		\end{align*}
		where $\nabla_{\bx\bx\bx_i} f(\bx,t) := \dfrac{\partial}{\partial \bx_i} \nabla_{\bx\bx} f(\bx,t)$.
	\end{itemize}
\end{assumption}
The first bound in Assumption \ref{bounds on second-order derivatives}-\emph{(i)} and the last bound in Assumption \ref{bounds on second-order derivatives}-\emph{(ii)} are equivalent to uniform Lipschitz continuity of the gradient and the Hessian in $\bx$, respectively. Both assumptions are standard assumptions in second-order methods \cite{boyd2004convex}.  All other bounds are related to the time-varying aspect of the objective function and bound the rate at which the gradient and  Hessian functions vary with time. Notice that except for the bound $\|\nabla_{\bx t t} f(\bx,t)\|_2 \leq C_{xtt}$, all the other bounds are required for $\bh(\bx,t)$ to be uniformly Lipschitz \cite{simonetto2015class}.


\section{Self-Triggered Strategy}
In this section, we design a self-triggered sampling strategy that meets the desired specifications defined in Problem \ref{problem}. Consider the discrete implementation of the ideal dynamics \eqref{eq: time_varying_newton} at a sequence of times $\{ t_k \}_{k \in \mathbb{Z}_{+}}$ that is to be determined,
\begin{align} \label{eq: aperiodic sampling }
\dot{{\bx}}(t) = \bh(\bx(t_k),t_k),\  t_k \leq t < t_{k+1}.
\end{align}
Recalling the Lyapunov function \eqref{eq: Lyapunov function}, the instantaneous derivatives of $V(\bx,t)$ at the discrete sampling times~$\{ t_k \}$
are precisely
\begin{align} \label{eq: V_dot at sampling times}
\dot{V}(\bx(t_k),t_k) = -2\alpha V(\bx(t_k),t_k),\ k \in \mathbb{Z}_{+}.
\end{align}
In other words, the property~\eqref{eq: Lyapunov ODE} that holds at all times in the continuous-time framework is now only preserved at discrete sampling times. This means in general there is no guarantee that $\dot{V}(t)$ remains negative between sampling times $t \in (t_k, t_{k+1})$, as the optimizer is no longer updating its dynamics during this time interval. Given these observations, we need to predict, without having access to the objective function or its derivatives for $t > t_k$,  the earliest time after $t_k$ at which the Lyapunov function could possibly increase, and update the state dynamics at that time, denoted by $t_{k+1}$. Consequently, we desire a \emph{tight upper bound} on $\dot{V}(t)=\dot{V}(\bx(t),t)$ so that we are taking samples as conservatively as possible. Mathematically speaking, for each $t\geq t_k$, we can characterize the upper bound as follows,
\begin{align} \label{eq: triggering function def}
\phi_k(t) = &\underset{\mathcal{F}}{{\mbox{sup}}} \ \{\dot{V}(\bx(t),t) \colon \dot{\bx}(t)=\dot{\bx}(t_k),\ t\geq t_k\}.
\end{align}
where $\mathcal{F}$ is the class of all strongly convex objective functions $f' \colon \mathbb{R}^n \times \mathbb{R}_{+} \to \mathbb{R}$ such that
\begin{enumerate} \label{eq: F properties}
	\item $\nabla_{\bx} f'(\bx(t_k),t_k)=\nabla_{\bx} f(\bx(t_k),t_k)$,
	\item $\nabla_{\bx t} f'(\bx(t_k),t_k)=\nabla_{\bx t} f(\bx(t_k),t_k)$,
	\item $\nabla_{\bx\bx} f'(\bx(t_k),t_k)=\nabla_{\bx\bx} f(\bx(t_k),t_k)$,
	\item $f'(\bx,t)$ satisfies Assumption \ref{bounds on second-order derivatives}.\\
\end{enumerate}
In words, $\mathcal{F}$ is the set of all possible objective functions that agree with $f(\bx,t)$ and it's first and second-order derivatives at $(\bx(t_k),t_k)$, and satisfy the bounds in Assumption \ref{bounds on second-order derivatives}. Intuitively, the set $\mathcal{F}$ formalizes, in a functional way, the fact that we find $\phi_k(t)$ without having access to the objective function for $t>t_k$.
%
The above definition implies that $\dot{V}(t_k) \leq \phi_k(t)$.  In particular, we have that $\dot{V}(t_k)=\phi_k(t_k)=-2\alpha {V}(t_k) <0$ by \eqref{eq: triggering function def} and \eqref{eq: V_dot at sampling times}. Once $\phi_k(t)$ is characterized at time $t_k$ as a function of $t$, the next sampling time is set as the first time instant at which $\phi_k(t)$ crosses zero, i.e.,
\begin{align} \label{eq: next sampling time}
t_{k+1}={\phi^{-1}_k}(0),\ k\in \mathbb{Z}_{+}.
\end{align}
where $\phi_k^{-1}(.)$ is the inverse of the map $\phi_k(.)$. This choice ensures that $\dot{V}(t)\leq \phi_k(t) < \phi_k(t_{k+1}) =0$ for $t\in [t_k,t_{k+1})$. With this policy, the evaluated Lyapunov function $V(t)$ becomes a piece-wise continuously differentiable monotonically decreasing function of $t$ with discontinuous derivatives at the sampling times. We can view $\phi_k(t)$ as a triggering function which triggers the optimizer to sample when the event $\phi_k(t')=0$ occurs for some $t'>t_k$. This concept is illustrated in Figure \ref{fig: self-triggered concept}. In the next Subsection, we characterize $\phi_k(t)$ in closed-form.
\begin{figure}
	\centering
	\includegraphics[width=0.45\textwidth]{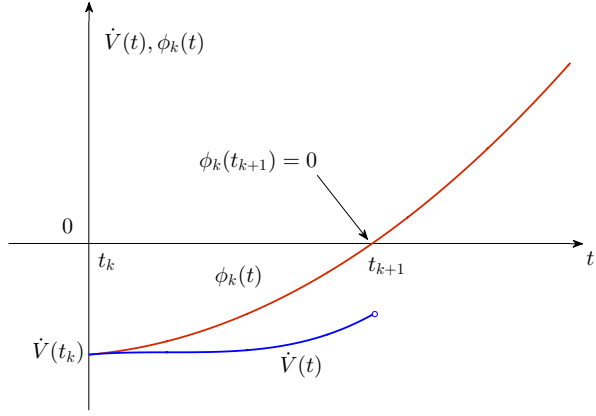}
	\caption{Concept of the self-triggered strategy. The triggering function $\phi_k(t)$ is a tight upper bound on $\dot{V}(t)$, and the optimizer is triggered to sample when the event  $\phi_k(t')=0$ occurs for some $t'>t_k$.}
	\label{fig: self-triggered concept}
\end{figure}

\subsection{Triggering Function} \label{subsection: triggering function}

\begin{lemma}[Second-order self-triggered strategy] \label{lem: trigerring function}
	Let $k \in \mathbb{Z}_{+}$. Then, given the bounds $\{C_{xx},C_{xt},C_{xxx},C_{xxt},C_{xtt}\}$ in Assumption \ref{bounds on second-order derivatives}, the triggering function $\phi_k(t)$ on the time interval $t_k \leq t \leq t_{k+1}$ is given by the following second-order polynomial,
	\begin{align} \label{eq: second order triggering function}
	\phi_k(t) &= \dfrac{1}{2} a_kb_k (t-t_k)^2 + (a_k^2+b_k \sqrt{2V(t_k)}) (t-t_k) \nonumber \\ &-2\alpha V(t_k).
	\end{align}
	where $a_k,\ b_k >0$ are computed as
	\begin{align} \label{eq: 2nd order coefficients}
	a_k &= \left(C_{xx} \|\dot{\bx}(t_k)\|_2 + C_{xt} \right), \nonumber \\
	b_k &= \left(C_{xxx} \|\dot{\bx}(t_k)\|_1 + 2C_{xxt} \right) \|\dot{\bx}(t_k)\|_2 + C_{xtt}.
	\end{align}
	and $\dot{\bx}(t_k)$ is computed according to \eqref{eq: aperiodic sampling }.
	\ifx
	\[
	\Scale[0.9]{\dot{\bx}(t_k) = -{\nabla_{\bx\bx}^{-1} f_0(\bx(t_k),t_k)}[\alpha\nabla_{\bx} f_0(\bx(t_k),t_k)+\nabla_{\bx t} f_0(\bx(t_k),t_k)]}.
	\]
	\fi
\end{lemma}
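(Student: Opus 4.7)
The plan is to establish the closed form for $\phi_k(t)$ by computing $\ddot V(t)$ on the interval $[t_k,t_{k+1})$ and then integrating a uniform affine-in-time upper bound. To streamline the derivation, I introduce $\bbp(t):=\nabla_{\bx}f(\bx(t),t)$ and $\bbg(t):=\nabla_{\bx\bx}f(\bx(t),t)\,\dot\bx(t_k)+\nabla_{\bx t}f(\bx(t),t)$. Since $\dot\bx(t)\equiv\dot\bx(t_k)$ between samples, the chain rule gives $\dot\bbp(t)=\bbg(t)$, and substituting the explicit form of $\bh$ in~\eqref{eq: descent_vector_field} yields the identity $\bbg(t_k)=-\alpha\bbp(t_k)$. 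Consequently $V(t)=\tfrac12\|\bbp(t)\|_2^2$ satisfies
\[
\dot V(t)=\bbp(t)^{\top}\bbg(t),\qquad \ddot V(t)=\|\bbg(t)\|_2^2+\bbp(t)^{\top}\dot\bbg(t),
\]
with the baseline $\dot V(t_k)=-2\alpha V(t_k)$ already recovered.

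Next I would bound the three norms $\|\bbg(t)\|_2$, $\|\dot\bbg(t)\|_2$, and $\|\bbp(t)\|_2$ uniformly for $t\ge t_k$ using Assumption~\ref{bounds on second-order derivatives}. The triangle inequality together with Assumption~\ref{bounds on second-order derivatives}(i) immediately gives $\|\bbg(t)\|_2\le C_{xx}\|\dot\bx(t_k)\|_2+C_{xt}=a_k$, and integrating $\dot\bbp=\bbg$ yields $\|\bbp(t)\|_2\le\sqrt{2V(t_k)}+a_k(t-t_k)$. The only delicate step is computing $\dot\bbg$: a careful application of the chain rule, including componentwise differentiation of the matrix-valued $\nabla_{\bx\bx}f$, produces
\[
\dot\bbg(t)=\Bigl[\sum_{i=1}^n\dot x_i(t_k)\nabla_{\bx\bx\bx_i}f\Bigr]\dot\bx(t_k)+2\,\nabla_{\bx\bx t}f\,\dot\bx(t_k)+\nabla_{\bx tt}f,
\]
where the factor $2$ reflects that $\nabla_{\bx\bx t}f\,\dot\bx(t_k)$ appears both from $\tfrac{d}{dt}(\nabla_{\bx\bx}f\,\dot\bx(t_k))$ and from $\tfrac{d}{dt}\nabla_{\bx t}f$. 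Taking spectral norms and using $\bigl\|\sum_i\dot x_i(t_k)\nabla_{\bx\bx\bx_i}f\bigr\|_2\le C_{xxx}\|\dot\bx(t_k)\|_1$ (the step that forces the $\ell_1$-norm of $\dot\bx(t_k)$ into $b_k$), together with the remaining bounds in Assumption~\ref{bounds on second-order derivatives}(ii), delivers $\|\dot\bbg(t)\|_2\le b_k$.

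Combining the three estimates via Cauchy--Schwarz gives
\[
\ddot V(t)\le a_k^2+b_k\sqrt{2V(t_k)}+a_kb_k(t-t_k),\qquad t\in[t_k,t_{k+1}),
\]
and integrating from $t_k$ to $t$, starting from $\dot V(t_k)=-2\alpha V(t_k)$, reproduces~\eqref{eq: second order triggering function} exactly. Tightness over the class $\mathcal F$ follows because each inequality used (triangle inequality on $\dot\bbg$, Cauchy--Schwarz on $\bbp^{\top}\dot\bbg$, and the spectral bounds of Assumption~\ref{bounds on second-order derivatives}) can be saturated simultaneously, up to arbitrary slack, by an $f'\in\mathcal F$ whose third-order derivatives align with $\bbp(t_k)$, so the polynomial coincides with the supremum in~\eqref{eq: triggering function def}. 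I expect the main obstacle to be the bookkeeping of the chain-rule expansion of $\tfrac{d}{dt}\nabla_{\bx\bx}f$: tracking how the $n$ third-order matrices $\nabla_{\bx\bx\bx_i}f$ contract against $\dot\bx(t_k)$ in two different norm slots is precisely what yields the mixed $\ell_1$--$\ell_2$ form of $b_k$ in~\eqref{eq: 2nd order coefficients}.
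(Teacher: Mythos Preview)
Your proposal is correct and follows essentially the same route as the paper: express $\ddot V$ in terms of the gradient and its time-derivatives, bound the three factors by $a_k$, $b_k$, and $\sqrt{2V(t_k)}+a_k(t-t_k)$ via Assumption~\ref{bounds on second-order derivatives} and the chain rule (including the same componentwise treatment of $\nabla_{\bx\bx\bx_i}f$ that produces the $\ell_1$--$\ell_2$ structure of $b_k$), then integrate the resulting affine-in-$(t-t_k)$ bound on $\ddot V$ starting from $\dot V(t_k)=-2\alpha V(t_k)$. The only cosmetic differences are that the paper bounds $|\ddot V|$ and uses the triangle inequality for integrals rather than bounding $\ddot V$ directly, and that your closing remark on tightness over $\mathcal F$ goes slightly beyond what the paper actually verifies.
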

\begin{proof}
	See Appendix \ref{proof:em: trigerring function}.
	\end{proof}

\ifx
\begin{table*}[t]
	\centering
	\begin{tabular}{ c |l }
		\hline
		\\ Sampling Strategy & Self-Triggering Function \\ \\
		\hline \\
			\multirow{3}{*}{Second Order Self Triggered Optimizer} & $\phi_k(t)=\frac{1}{2} a_kb_k (t-t_k)^2 + (a_k^2+b_k \sqrt{2V(t_k)}) (t-t_k) -2\alpha V(t_k), \quad t \geq t_k$ \\ \\
			& $a_k=\left(C_{xx} \|\dot{\bx}(t_k)\|_2 + C_{xt} \right)$ \\ \\
			& $b_k= \left(C_{xxx} \|\dot{\bx}(t_k)\|_1 + 2C_{xxt} \right) \|\dot{\bx}(t_k)\|_2 + C_{xtt}$ \\ \\
			\hline
		\\ Third Order Self Triggered Optimizer & $\psi_k(t)=\frac{1}{2} b_k^2 (t-t_k)^3 + \frac{3}{2} \alpha \sqrt{2V(t_k)} b_k(t-t_k)^2 + \left(\sqrt{2V(t_k)}b_k + 2\alpha^2 V(t_k) \right)(t-t_k)-2\alpha V(t_k),  \quad t \geq t_k$ \\ \\
		& $b_k= \left(C_{xxx} \|\dot{\bx}(t_k)\|_1 + 2C_{xxt} \right) \|\dot{\bx}(t_k)\|_2 + C_{xtt}$ \\ \\
		\hline
	\end{tabular}
	
	\caption{Blabla}
	\label{tab:1}
\end{table*}
\fi
A careful investigation into the Proof of Lemma \ref{lem: trigerring function} reveals the fact that $\phi_k(t)$ can still be characterized without having access to the bounds in Assumption-\emph{(i)}, i.e., we assume known bounds on the third-order derivatives only. By virtue of this relaxation, our strategy prescribes less conservative sampling times. We will see in the next lemma that the triggering function, in this case, is a third-order polynomial.

\begin{lemma}[Third-order self-triggered strategy] \label{lem: trigerring function third order}
	Let $k \in \mathbb{Z}_{+}$. Then, given the bounds $\{C_{xxx},C_{xxt},C_{xtt}\}$ in Assumption \ref{bounds on second-order derivatives}, the triggering function $\phi_k(t)$ on the time interval $t_k \leq t \leq t_{k+1}$ is given by
	\begin{align} \label{eq: third order triggering function}
	\phi_k(t):=&\frac{1}{2} b_k^2 (t-t_k)^3 + \frac{3}{2} \alpha \sqrt{2V(t_k)} b_k(t-t_k)^2 \nonumber \\ &+ \left(\sqrt{2V(t_k)}b_k + 2\alpha^2 V(t_k) \right)(t-t_k)-2\alpha V(t_k).
	\end{align}
	with $b_k$ previously defined in \eqref{eq: 2nd order coefficients}.
\end{lemma}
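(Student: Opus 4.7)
The plan is to mimic the proof of Lemma~\ref{lem: trigerring function}, but to replace every use of Assumption~\ref{bounds on second-order derivatives}-\emph{(i)} by one obtained through time-integration of the third-order bounds alone. Introduce the shorthand $g(t) := \nabla_{\bx} f(\bx(t),t)$ and $q(t) := \nabla_{\bx\bx} f(\bx(t),t)\,\dot{\bx}(t_k) + \nabla_{\bx t} f(\bx(t),t)$. Since $\dot{\bx}(t) = \dot{\bx}(t_k)$ on $[t_k,t_{k+1})$, the chain rule gives $\dot g(t) = q(t)$; hence $V(t) = \tfrac{1}{2}\|g(t)\|_2^2$ satisfies
\begin{equation*}
\dot V(t) = g(t)^\top q(t), \qquad \ddot V(t) = \|q(t)\|_2^2 + g(t)^\top \dot q(t).
\end{equation*}
From the definition of $\bh$ in~\eqref{eq: descent_vector_field} one has $q(t_k) = -\alpha\,g(t_k)$, so that $\|q(t_k)\|_2 = \alpha\sqrt{2V(t_k)}$ and $\dot V(t_k) = -2\alpha V(t_k)$.

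The first step is to reuse the estimate $\|\dot q(s)\|_2 \leq b_k$ already derived in the proof of Lemma~\ref{lem: trigerring function}, as this bound invokes only Assumption~\ref{bounds on second-order derivatives}-\emph{(ii)} and is therefore still available here. The difference from Lemma~\ref{lem: trigerring function} is that the estimate $\|q(s)\|_2 \leq a_k$ used there relied on second-order constants that we are no longer allowed to use. Instead, I would integrate $\dot q$ and then $q$ in time to obtain
\begin{align*}
\|q(s)\|_2 &\leq \|q(t_k)\|_2 + \int_{t_k}^{s}\!\|\dot q(u)\|_2\,du \\
&\leq \alpha\sqrt{2V(t_k)} + b_k (s-t_k), \\
\|g(s)\|_2 &\leq \|g(t_k)\|_2 + \int_{t_k}^{s}\!\|q(u)\|_2\,du \\
&\leq \sqrt{2V(t_k)} + \alpha\sqrt{2V(t_k)}(s-t_k) + \tfrac{1}{2}b_k (s-t_k)^2.
\end{align*}

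Applying Cauchy--Schwarz to the cross term $g(s)^\top\dot q(s)$ and substituting the three bounds above into $\ddot V(s) \leq \|q(s)\|_2^2 + \|g(s)\|_2\,\|\dot q(s)\|_2$ produces an upper bound on $\ddot V(s)$ that is a polynomial of degree two in $(s-t_k)$. Integrating once from $t_k$ to $t$ and adding the initial value $\dot V(t_k) = -2\alpha V(t_k)$ should yield, after collecting powers of $(t-t_k)$, exactly the cubic announced in~\eqref{eq: third order triggering function}. The only real obstacle is coefficient bookkeeping: one must check that the expansion of $(\alpha\sqrt{2V(t_k)} + b_k u)^2$ together with $b_k\,\|g(s)\|_2$ collapses cleanly into the advertised coefficients $\sqrt{2V(t_k)}b_k + 2\alpha^2 V(t_k)$, $\tfrac{3}{2}\alpha\sqrt{2V(t_k)}b_k$ and $\tfrac{1}{2}b_k^2$. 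Finally, to verify that $\phi_k$ in~\eqref{eq: third order triggering function} is indeed the supremum over the admissible class $\mathcal{F}$ (and not merely an upper bound), I would exhibit --- as in the proof of Lemma~\ref{lem: trigerring function} --- an $f' \in \mathcal{F}$ whose third-order derivatives take their maximal allowed magnitudes and are aligned with $g(t_k)$ and $q(t_k)$, so that every use of Cauchy--Schwarz and the triangle inequality above is saturated.
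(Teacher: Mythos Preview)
Your proposal is correct and follows essentially the same route as the paper: reuse the third-order bound $\|\ddot g\|_2 \le b_k$, integrate (the paper phrases this via the mean-value form of Taylor's theorem, which yields identical inequalities) to bound $\|\dot g\|$ and $\|g\|$, substitute into $|\ddot V|$, and integrate once more to recover~\eqref{eq: third order triggering function}. Your final remark about exhibiting an extremal $f'\in\mathcal F$ to certify that $\phi_k$ is the actual supremum goes beyond what the paper's proof carries out; the paper stops at the inequality $\dot V(t)\le\phi_k(t)$.
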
	
\begin{proof}
	See Appendix \ref{proof:lem: trigerring function third order}.
\end{proof}
\begin{remark} \label{rem: sampling times}
	{\rm It can be observed from \eqref{eq: second order triggering function} and \eqref{eq: third order triggering function} that $\phi_k(t)$ is fully characterized at time $t_k$ without having access to the objective function for $t>t_k$. In this context, the self-triggered strategy is online, implying the property \emph{(i)} in Problem \ref{problem}. Moreover, $\phi_k(t)$ has a unique root on the interval $(t_k,\infty)$ when $V(t_k)> 0$, implying that the sampling time $t_{k+1}=\phi^{-1}_k(0)$ is well-defined and the step size satisfies $t_{k+1} - t_k > 0$ for all $k$.	\oprocend}
\end{remark}
\subsection{Asymptotic Convergence}
The triggering functions developed in the previous lemmas have the following properties by construction:
\begin{enumerate}
	\item[(a)] $\phi_k(t)$ is convex in and strictly increasing on $t_k \leq t \leq t_{k+1}$.
	\item[(b)] $\dot{V}(t) \leq \phi_k(t)<0$ on  $t_k \leq t < t_{k+1}$.
	\item[(c)]$\phi_k(t_k)=\dot{V}(t_k)=-2\alpha V(t_k)$.
	\item[(d)] $\phi_k(t_{k+1})=0$.
\end{enumerate}
We establish in the next theorem that the above properties are enough to secure asymptotic monotone convergence of the Lyapunov function to zero.
\begin{theorem} \label{thm: convergence of second order}
	Let $\{t_k\}_{k \in \mathbb{Z}_{++}}$ be the sequence of sampling times generated according to \eqref{eq: next sampling time}, where $\phi_k(t)$ is defined in \eqref{eq: second order triggering function} or \eqref{eq: third order triggering function}. Then, for any $k \in \mathbb{Z}_{+}$ the Lyapunov function satisfies $V(t_{k+1})<V(t_k)$, and that $\lim\limits_{k \to \infty} V(t_k)=0$.
\end{theorem}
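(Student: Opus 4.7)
I will rely exclusively on the four properties (a)--(d) of $\phi_k$ recorded just before the theorem, together with Assumptions~\ref{strong convexity} and~\ref{bounds on second-order derivatives}, first to obtain the strict descent $V(t_{k+1})<V(t_k)$ and then to upgrade this to $V(t_k)\to 0$ by a contradiction argument.

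The descent step is essentially immediate. By property (b), $\dot V(t)\leq \phi_k(t)<0$ on $[t_k,t_{k+1})$, so integration over $[t_k,t_{k+1}]$ yields $V(t_{k+1})-V(t_k)<0$. Consequently $\{V(t_k)\}$ is monotone and nonnegative, so it has a limit $V^\star\geq 0$, and my goal reduces to ruling out $V^\star>0$.

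To this end I sharpen the previous bound using property (a): since $\phi_k$ is convex on $[t_k,t_{k+1}]$, it lies below the secant joining $(t_k,-2\alpha V(t_k))$ and $(t_{k+1},0)$, so
\begin{align*}
V(t_{k+1})-V(t_k)\;\leq\;\int_{t_k}^{t_{k+1}}\phi_k(t)\,dt\;\leq\;-\alpha V(t_k)(t_{k+1}-t_k).
\end{align*}
Assuming $V^\star>0$, if I can produce a uniform positive lower bound $t_{k+1}-t_k\geq\bar\tau$, then $V(t_k)-V(t_{k+1})\geq\alpha V^\star\bar\tau>0$ for every $k$, which is incompatible with $V(t_k)\to V^\star$ and hence forces $V^\star=0$.

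The main technical step is therefore the construction of $\bar\tau$. Since $\{V(t_k)\}$ is decreasing, $\|\nabla_{\bx}f(\bx(t_k),t_k)\|_2=\sqrt{2V(t_k)}\leq\sqrt{2V(t_0)}$ uniformly in $k$; combined with $\|\nabla_{\bx\bx}^{-1}f\|_2\leq 1/m$ (Assumption~\ref{strong convexity}) and $\|\nabla_{\bx t}f\|_2\leq C_{xt}$ (Assumption~\ref{bounds on second-order derivatives}), the closed form~\eqref{eq: descent_vector_field} shows that $\|\dot\bx(t_k)\|_2$ is uniformly bounded. Hence the coefficients $a_k,b_k$ in~\eqref{eq: 2nd order coefficients} admit uniform upper bounds $\bar a,\bar b$. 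Replacing each coefficient by its upper bound and using $V(t_k)\geq V^\star$ in the constant term, I majorize $\phi_k(\tau)$, as a function of $\tau=t-t_k\geq 0$, by a fixed convex polynomial $\bar\phi(\tau)$ (quadratic for Lemma~\ref{lem: trigerring function} and cubic for Lemma~\ref{lem: trigerring function third order}) with $\bar\phi(0)=-2\alpha V^\star<0$ and positive leading coefficient. The unique positive root $\bar\tau$ of $\bar\phi$ then satisfies $t_{k+1}-t_k\geq\bar\tau>0$, because $\phi_k(\bar\tau)\leq\bar\phi(\bar\tau)=0$ together with $\phi_k(0)<0$ and the monotonicity in property (a) prevents $\phi_k$ from vanishing before $\bar\tau$. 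This closes the contradiction, and I expect the bookkeeping behind the uniform coefficient bounds (tracking $\|\dot\bx(t_k)\|_1$ versus $\|\dot\bx(t_k)\|_2$ through~\eqref{eq: 2nd order coefficients}) to be the only genuinely tedious part of the argument.
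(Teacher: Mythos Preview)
Your argument is correct, and the first half---using convexity of $\phi_k$ to bound it by the secant through $(t_k,-2\alpha V(t_k))$ and $(t_{k+1},0)$ and integrating to obtain $V(t_{k+1})-V(t_k)\leq-\alpha V(t_k)(t_{k+1}-t_k)$---is exactly what the paper does. For the conclusion $V(t_k)\to0$, however, you supply more than the paper: the paper's own proof stops after observing that the right-hand side of this inequality is strictly negative whenever $V(t_k)>0$ and simply asserts the limit is zero, which taken literally is not sufficient (strict decrease of a nonnegative sequence does not preclude a positive limit). Your contradiction argument---uniformly bounding $\|\dot\bx(t_k)\|$ and hence $a_k,b_k$ via Assumptions~\ref{strong convexity} and~\ref{bounds on second-order derivatives}, majorizing $\phi_k$ by a fixed polynomial $\bar\phi$, and extracting a uniform lower bound $\bar\tau$ on the inter-event times---closes this gap rigorously. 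That mechanism is precisely the one the paper deploys later in the proof of Theorem~\ref{lem: finite time convergence} (with $\epsilon$ playing the role of your $V^\star$), so your extra bookkeeping is entirely consistent with the paper's toolkit even though the paper does not invoke it in its proof of Theorem~\ref{thm: convergence of second order}.
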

\begin{proof}
See Appendix \ref{proof:thm: convergence of second order}.
\end{proof}
\begin{remark}[Role of $\alpha$] \label{rem: role alpha}
	{\rm In the proof of Theorem \ref{thm: convergence of second order}, we showed that the Lyapunov function at the sampling times satisfies the inequality
	\[
	V(t_{k+1}) - V(t_{k}) \leq -\alpha V(t_k) (t_{k+1}-t_k).
	\]
	Combining this inequality with the trivial inequality $-V(t_k) \leq V(t_{k+1})-V(t_k)$ lets us conclude that for all $k \in \mathbb{Z}_+$, the step sizes are bounded as $t_{k+1}-t_k \leq \alpha^{-1}$. Therefore, increasing $\alpha$ will reduce the step sizes such that the effective step size $\alpha(t_{k+1}-t_k)$ is bounded by one. This observation is consistent with backtracking line search method in stationary optimization in which the step sizes are bounded by one.}
\end{remark}
We have the following corollary as an immediate consequence of Theorem \ref{thm: convergence of second order}.
\begin{corollary} \label{cor: finite time convergence}
		Let $\{t_k\}_{k \in \mathbb{Z}_{++}}$ be the sequence of sampling times generated according to \eqref{eq: next sampling time}, where $\phi_k(t)$ is defined in \eqref{eq: second order triggering function} or \eqref{eq: third order triggering function}. Then, for any $\epsilon>0$, there exist a finite positive integer $k'(\epsilon) \in \mathbb{Z}_{+}$ such that $V(t_{k'(\epsilon)})< \epsilon$.
\end{corollary}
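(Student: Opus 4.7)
The plan is to derive Corollary~1 as a direct consequence of Theorem~1, which already establishes the two ingredients we need: strict monotonicity $V(t_{k+1}) < V(t_k)$ and the limit $\lim_{k\to\infty} V(t_k) = 0$. Since $V$ is nonnegative by its definition $V(\bx,t) = \tfrac{1}{2}\|\nabla_{\bx} f(\bx,t)\|_2^2 \geq 0$, the sequence $\{V(t_k)\}_{k\in\mathbb{Z}_+}$ is a nonnegative, strictly decreasing real sequence converging to $0$.

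Given this setup, I would simply invoke the $\varepsilon$-$N$ definition of convergence. Fix an arbitrary $\epsilon > 0$. Because $V(t_k) \to 0$, there exists some positive integer $k'(\epsilon) \in \mathbb{Z}_+$ such that for all $k \geq k'(\epsilon)$, we have $|V(t_k)| < \epsilon$. Since $V(t_k) \geq 0$, this is exactly $V(t_k) < \epsilon$, and in particular $V(t_{k'(\epsilon)}) < \epsilon$, which is the claim. The integer $k'(\epsilon)$ is finite by construction (it is the first index at which the convergent sequence falls below $\epsilon$), so finiteness is immediate.

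There is no real obstacle here: the corollary is a restatement of convergence in a ``first-hitting-time'' form, and all the technical work has already been absorbed into Theorem~1. The only mildly subtle point worth noting is that, thanks to the strict monotonicity part of Theorem~1, once $V(t_{k'(\epsilon)}) < \epsilon$, we in fact have $V(t_k) < \epsilon$ for all $k \geq k'(\epsilon)$, so the trajectory does not escape the $\epsilon$-neighborhood afterwards; this matches specification~\emph{(ii)} of Problem~1. The proof is therefore one short paragraph, and I would keep it that concise rather than introducing any new machinery.
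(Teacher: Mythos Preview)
Your proposal is correct and matches the paper's own treatment: the paper states Corollary~1 as ``an immediate consequence of Theorem~\ref{thm: convergence of second order}'' without giving a separate proof, which is exactly the $\varepsilon$--$N$ argument you outline. There is nothing to add or change.
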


Next, we discuss the implementation issues of the self-triggered strategy proposed above.
\subsection{Implementation}

It can be seen from Theorem \ref{thm: convergence of second order} and the expression of $\phi_k(t)$ in \eqref{eq: second order triggering function} or \eqref{eq: third order triggering function} that as $k \to \infty$, $V(t_k) \to 0$, and therefore $t_{k+1} - t_k \rightarrow 0$, i.e., the step sizes vanish asymptotically. This might cause Zeno behavior, i.e., the possibility for infinitely many samples over a finite interval of time.  To avoid this possibility, we need to modify the algorithm to ensure that the step sizes are lower bounded by a positive constant all the time; a stronger property than no Zeno behavior. For this purpose, we implement the algorithm in two phases: In the first phase, we use the sampling strategy developed in Subsection \ref{subsection: triggering function} until the state $\bx(t)$ reaches within a pre-specified neighborhood around $\bx^\star(t)$. In the second phase, we switch the triggering strategy so as to merely maintain $\bx(t)$ in that neighborhood forever. More specifically, for the sequence of sampling times $\{t_k\}_{k\in \mathbb{Z}_{+}}$ and any $\epsilon>0$, define 
$$k'(\epsilon)=\min \{k\in \mathbb{Z}_{+} \colon V(t_k) \leq \epsilon\}.$$
In words, $t_{k'(\epsilon)}$ is the first sampling time at which the Lyapunov function is below the threshold $\epsilon$. By Corollary \ref{cor: finite time convergence}, $k'(\epsilon)$ is finite. Now for $t \geq t_{k'(\epsilon)}$, we propose another self-triggered sampling strategy such that the Lyapunov function satisfies $V(t) \leq \epsilon$ for all $t\geq t_{k'(\epsilon)}$. Recalling the inequality $\dot{V}(t) \leq {\phi}_k(t)$, we can obtain an upper bound for $V(t)$ as follows,
\begin{align}
V(t) \leq V(t_k) + \int_{t_k}^{t} \phi_k(\sigma)d\sigma,\ t\geq t_k.
\end{align}
The right-hand side is a polynomial in $t$ which can be fully characterized at  $t_k$. Now for $k \geq k'(\epsilon)$, we set the next sampling time $t_{k+1}$ as the first time instant after $t_k$ at which the upper bound function in the right-hand side crosses $\epsilon$, i.e., we select $t_{k+1}$ according to the following rule,
\begin{align} \label{eq: sampling time phase two}
t_{k+1} = \psi_k^{-1}(\epsilon),\ k \in \mathbb{Z}_{+}.
\end{align}
where the new triggering function is defined as
\begin{align} \label{eq: psi function}
\psi_k(t):=V(t_k) + \int_{t_k}^{t} \phi_k(\sigma)d\sigma,\ t\geq t_k.
\end{align}
This policy guarantees that $V(t) \leq \psi_k(t) \leq \psi_k(t_{k+1}) = \epsilon$ for all $k > k'(\epsilon)$. As a result, by virtue of strong convexity \cite{boyd2004convex}, i.e.,  the inequality $\|\bx(t)-\bx^\star(t)\|_2 \leq 2/m \|\nabla_{\bx} f(\bx(t),t)\|_2$, and recalling \eqref{eq: Lyapunov function}, the following bound
\begin{align}
\|\bx(t)-\bx^\star(t)\|_2 \leq \dfrac{2 ({2\epsilon})^{\frac{1}{2}}}{m}.
\end{align}
will hold for all $t \geq t_{k'(\epsilon)}$. 
\begin{remark}
	{\rm According to \eqref{eq: sampling time phase two}, the sampling time is obtained by solving the equation $\psi_k(t)=\epsilon$ for $t>t_k$.  By \eqref{eq: psi function}, it can be verified that $t_{k+1}>t_k$ for $0 \leq V(t_k) < \epsilon$. In the limiting case $V(t_k)=\epsilon$, there are two solutions to $\psi_k(t)=\epsilon$; the first one is $t_{k+1}=t_k$ which implies zero step size and is ignored; the other one is strictly greater that $t_k$, and is selected as the next sampling time.}
\end{remark}

We summarize the proposed implementation in Table \ref{tab: Second Order Self-Triggered Optimizer}, where we use the notation $\bx_k:=\bx(t_k)$ and $\dot{\bx}_k:=\dot{\bx}(t_k)$.

\begin{algorithm}[h]
	{\footnotesize \vspace*{1ex}
		Second-order self-triggered strategy \\
		\textbf{Given}: $C_{xx},\ C_{xt},\ C_{xxx},\ C_{xxt},\ C_{xtt},\ \alpha,\ t_0,\ t_f,\ \bx(t_0),\ \epsilon.$
		\\ \\
		Third-order self-triggered strategy \\
		\textbf{Given}: $C_{xxx},\ C_{xxt},\ C_{xtt},\ \alpha,\ t_0,\ t_f,\ \bx(t_0),\ \epsilon.$ \\
		\begin{algorithmic}[1]
			\State Initialization: Set $k=0$, and $\bx_0 = \bx(t_0)$.
			\While {$t_k<t_f$}
			\State Compute $$\dot{\bx}_k =-{\nabla_{\bx\bx}^{-1} f_0(\bx_k,t_k)}[\alpha\nabla_{\bx} f_0(\bx_k,t_k)+\nabla_{\bx t} f_0(\bx_k,t_k)].$$
			\If {$\|\nabla_{\bx} f_0(\bx_k,t_k)\|_2 \geq (2\epsilon)^{\frac{1}{2}}$} \\
			\State Compute $t_{k+1}=\phi^{-1}_k(0)$  from \eqref{eq: second order triggering function} (or \eqref{eq: third order triggering function}).
			\Else 
			\State Compute $t_{k+1}=\psi^{-1}_k(\epsilon)$ from \eqref{eq: psi function}.
			\EndIf
			\State Update $\quad \bx_{k+1} = \bx_k + \dot{\bx}_k(t_{k+1}-t_k)$.
			\State Update $\quad k = k+1$.
			\EndWhile
		\end{algorithmic}}
		\caption{\hspace*{-.5ex}: Self-triggered optimizer} \label{tab: Second Order Self-Triggered Optimizer}
	\end{algorithm}
We close this section by the following theorem which accomplishes the main goals defined in Problem \ref{problem}.
\begin{theorem} \label{lem: finite time convergence}
	Let $\{t_k\}_{k \in \mathbb{Z}_{+}}$ be the sequence of sampling times generated by Algorithm \ref*{tab: Second Order Self-Triggered Optimizer}. Then, for any $\epsilon>0$, there exists a nonnegative integer $m \in \mathbb{Z}_{+}$ such that: (i) $V(t_m)<\epsilon$ for all $t \geq t_m$; and (ii) $t_{k+1}-t_{k} > \tau(\epsilon)$ for all $k \in \mathbb{Z}_{+}$ and some $\tau(\epsilon)>0$.
\end{theorem}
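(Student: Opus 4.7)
The plan is to prove the two properties separately. For \emph{(i)}: by Corollary~\ref{cor: finite time convergence}, the index $m := \min\{k \in \mathbb{Z}_+ \colon V(t_k) \leq \epsilon\}$ is finite, and for $k \geq m$ Algorithm~\ref{tab: Second Order Self-Triggered Optimizer} selects $t_{k+1} = \psi_k^{-1}(\epsilon)$. I would show by induction on $k \geq m$ that $V(t) \leq \epsilon$ for all $t \in [t_k, t_{k+1}]$. Integrating the pointwise bound $\dot V(t) \leq \phi_k(t)$ (which is built into the construction of $\phi_k$) gives $V(t) \leq V(t_k) + \int_{t_k}^t \phi_k(\sigma)\,d\sigma = \psi_k(t)$. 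Because $\phi_k$ is strictly increasing and changes sign exactly once on $[t_k, \infty)$, $\psi_k$ is unimodal: it starts at $V(t_k) \leq \epsilon$, decreases (since $\dot\psi_k(t_k) = \phi_k(t_k) = -2\alpha V(t_k) \leq 0$), attains a minimum, then rises and hits $\epsilon$ at $t_{k+1}$. Hence $\psi_k(t) \leq \epsilon$ on $[t_k, t_{k+1}]$, closing the induction.

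For \emph{(ii)}, I would split the sequence into phase~1 ($k<m$) and phase~2 ($k \geq m$). Phase~1 has only $m$ steps, each strictly positive by Remark~\ref{rem: sampling times}, so its minimum is some $\tau_1(\epsilon) > 0$. Phase~2 contains infinitely many samples and needs a uniform lower bound. Strong convexity (Assumption~\ref{strong convexity}) gives $\|\nabla_{\bx\bx}^{-1} f\|_2 \leq 1/m$, and $V(t_k) \leq \epsilon$ gives $\|\nabla_{\bx} f(\bx_k,t_k)\|_2 \leq \sqrt{2\epsilon}$; combined with Assumption~\ref{bounds on second-order derivatives} this yields a uniform bound $\|\dot{\bx}(t_k)\|_2 \leq (\alpha\sqrt{2\epsilon}+C_{xt})/m$, which in turn propagates through \eqref{eq: 2nd order coefficients} to uniform upper bounds $a_k \leq \bar a(\epsilon)$ and $b_k \leq \bar b(\epsilon)$ on the coefficients of $\phi_k$, independent of $k$.

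The main obstacle is converting these uniform coefficient bounds into a step-size lower bound $\tau_2(\epsilon)$ that survives both extremes $V(t_k) \approx \epsilon$ and $V(t_k) \approx 0$. I would split into two subcases. If $V(t_k) \geq \epsilon/2$, the initial slope $\dot\psi_k(t_k) = -2\alpha V(t_k) \leq -\alpha\epsilon$ is bounded away from zero and $\psi_k(t) \leq V(t_k) \leq \epsilon$ on $[t_k, t_k + s^\star_k]$, where $s^\star_k$ is the unique positive root of $\phi_k$; applying the quadratic formula to \eqref{eq: second order triggering function} (or the cubic formula to \eqref{eq: third order triggering function}) with the uniform coefficient bounds yields $s^\star_k \geq \tau_B(\epsilon) > 0$. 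If instead $V(t_k) < \epsilon/2$, the gap $\epsilon - V(t_k) > \epsilon/2$ provides slack: $|\phi_k(\sigma)|$ admits an explicit uniform bound $C(\epsilon)$ on any bounded interval, so $\psi_k(t_k+\tau) \leq V(t_k) + C(\epsilon)\,\tau \leq \epsilon$ whenever $\tau \leq \epsilon/(2C(\epsilon)) =: \tau_A(\epsilon)$. Setting $\tau(\epsilon) := \min\{\tau_1(\epsilon),\,\tau_A(\epsilon),\,\tau_B(\epsilon)\} > 0$ completes the proof.
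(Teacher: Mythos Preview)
Your proposal is correct and shares the paper's overall skeleton---split into a transient phase and a steady-state phase, bound $a_k,b_k$ uniformly, and convert these into a positive lower bound on the step sizes---but the details differ in a few places worth noting.

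For part \emph{(i)} you actually give a more complete argument than the paper, which simply invokes Corollary~\ref{cor: finite time convergence}; that corollary only controls $V$ at sampling instants, whereas the claim $V(t)\le\epsilon$ for all $t\ge t_m$ requires precisely the unimodality of $\psi_k$ that you spell out (the paper asserts $\psi_k(t)\le\psi_k(t_{k+1})$ in the discussion before the theorem without justification).

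For part \emph{(ii)}, Phase~1, you take the shortcut that only finitely many samples occur with $V(t_k)\ge\epsilon$, so their minimum step is trivially positive; the paper instead derives an explicit $\tau_1(\epsilon)$ by bounding $a_k,b_k$ via $V(t_k)\le V(t_0)$ and replacing the constant term in~\eqref{eq: second order triggering function} by $-2\alpha\epsilon$. Both work; the paper's bound is constructive in $\epsilon$ and $V(t_0)$, yours is not. The substantive divergence is Phase~2. You dichotomize on $V(t_k)\gtrless\epsilon/2$: for $V(t_k)\ge\epsilon/2$ the step is at least the positive root $s_k^\star$ of $\phi_k$, which is bounded away from zero because the constant term $-2\alpha V(t_k)\le-\alpha\epsilon$; for $V(t_k)<\epsilon/2$ the slack $\epsilon-V(t_k)>\epsilon/2$ together with a crude bound on $|\phi_k|$ does the job. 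The paper instead upper-bounds $\psi_k$ by an explicit cubic in $\tau$, views its root $\tau_k'$ as an implicit function of $\eta=V(t_k)$, differentiates with respect to $\eta$, and checks the critical value $(2\alpha)^{-1}$ together with the endpoints $\eta=0,\epsilon$. Your case split is more elementary and sidesteps the implicit-function calculus; the paper's route gives slightly more explicit constants. One small refinement: your bound ``$|\phi_k|\le C(\epsilon)$ on any bounded interval'' needs the interval fixed in advance (say $[t_k,t_k+1]$) to avoid circularity, so $\tau_A(\epsilon)$ should really be $\min\{1,\epsilon/(2C(\epsilon))\}$.
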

\begin{proof}
	The first statement follows directly from Corollary \ref{cor: finite time convergence}. For the proof of the second statement, see Appendix \ref{proof:thm: No Zeno}.
\end{proof}

\section{Simulation}
In this section, we perform numerical experiments to illustrate our results. For simplicity in our exposition, we consider the following convex problem in one-dimensional space
\begin{align*}
x^\star(t) = \arg\min \ \frac{1}{2} (x-\cos(\omega t))^2 + \frac{k}{2} \cos^2(2\omega t) \exp({\mu x^2}).
\end{align*} 
where $x \in \mathbb{R}$, $t \in \mathbb{R}_{+}$, $\omega=\pi/5$, $k=2$, and $\mu=1/2$. For these numerical values, we have that $C_{xxx}=3.7212$, $C_{xxt}=2.6924$, and $C_{xtt}=6.9369$. We solve this problem for the time interval $t\in [0,7]$ via Algorithm \ref*{tab: Second Order Self-Triggered Optimizer} using the triggering function \eqref{eq: third order triggering function}, and setting $\alpha=5$ and $\epsilon=0.01$. The total number of updates are $N=108$, with the step sizes having a mean value of $\bar{h}=0.0662$ and standard deviation $\sigma = 0.0501$. For comparison, we also solve the optimization problem by a more standard periodic implementation. We plot all the solutions $\bx(t)$ in Figure~\ref{fig: x(t)} along with the $\log_{e}$ of the total number of samples required in each execution. It can be observed that small sampling periods, e.g., $h = 0.0001,0.001,0.01$, yield a convergence performance similar to the self-triggered strategy, but uses a far higher number of updates. On the other hand, larger sampling periods, e.g., $h=0.1,0.2,0.3$, result in comparable number of samples as the self-triggering strategy at the expense of slower convergence. It should also be noted that we do not know a priori what sampling period yields good convergence results with a reasonable number of requires samples; however, the self-triggered strategy is capable of automatically tuning the step sizes to yield good performance while utilizing a much smaller number of samples.
\begin{figure}
	\centering
	\includegraphics[width=0.45\textwidth]{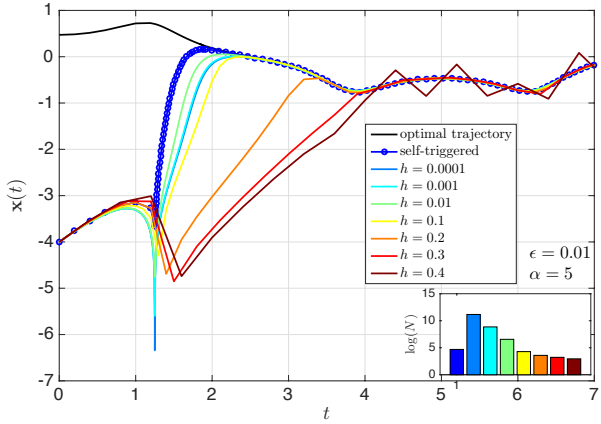}
	\caption{Plot of $\bx(t)$ against $t$ for the self-triggered strategy in Algorithm \ref{tab: Second Order Self-Triggered Optimizer}, and for periodic discretization with various sampling periods.}
	\label{fig: x(t)}
\end{figure}

\emph{Effect of $\epsilon$:} Next, we study the effect of the design parameter $\epsilon$ on the number of samples and the convergence performance of the self-triggered strategy. More specifically, we run Algorithm \ref{tab: Second Order Self-Triggered Optimizer} with all the parameters as before, and with different values of $\epsilon$. Figure \ref{fig: effect of eps} shows the resulting trajectories for various values of $\epsilon$. It is observed that $\epsilon$ does not change neither the transient convergence phase, but rather affects the steady state tracking phase. Moreover, the number of samples are almost unaffected by changing $\epsilon$.

\emph{Effect of $\alpha$:} Finally, we study the performance of the self-triggered strategy as $\alpha$ changes. Intuitively, higher values of $\alpha$ puts more weight on the descent part of the dynamics ($\nabla_{\bx\bx}^{-1} f(\bx,t)\nabla_{\bx} f(\bx,t)$) than the tracking part ($\nabla_{\bx\bx}^{-1} f(\bx,t)\nabla_{\bx t} f(\bx,t)$), according to \eqref{eq: time_varying_newton}. Hence, we expect that we get more rapid convergence to the $\epsilon$-neighborhood of the optimal trajectory by increasing $\alpha$. Figure \ref{fig: effect of alpha} illustrates the resulting trajectories for different values of $\alpha$. As expected, as we increase $\alpha$, the trajectory converges faster to the optimal trajectory. The number of samples, however, are not affected by $\alpha$. This observation is in agreement with Remark \ref*{rem: role alpha}, where we showed that the \emph{effective} step sizes $\alpha(t_{k+1}-t_k)$ are between zero and one, independent of $\alpha$. In the limiting case $\alpha \to \infty$, the step sizes get arbitrarily small which is not desirable.

\begin{figure}
	\centering
	\includegraphics[width=0.45\textwidth]{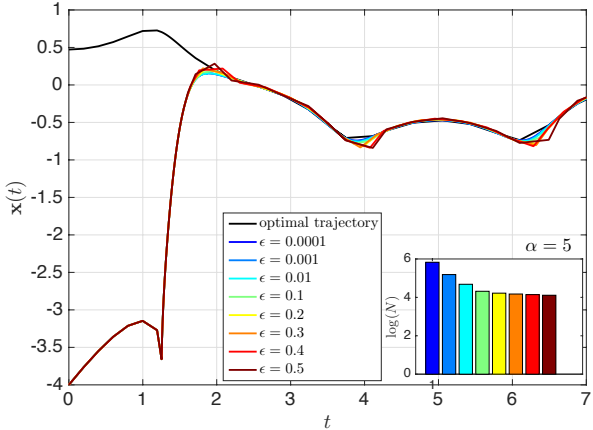}
	\caption{Plot of $\bx(t)$ against $t$ for the self-triggered strategy in Algorithm \ref{tab: Second Order Self-Triggered Optimizer}, and for various values of $\epsilon$.}
	\label{fig: effect of eps}
\end{figure}

\begin{figure}\
	\centering
	\includegraphics[width=0.45\textwidth]{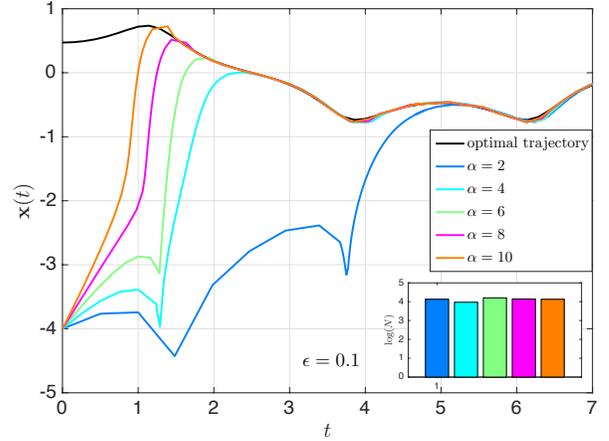}
	\caption{Plot of $\bx(t)$ against $t$ for the self-triggered strategy in Algorithm \ref{tab: Second Order Self-Triggered Optimizer}, and for various values of $\alpha$.}
	\label{fig: effect of alpha}
\end{figure}

\section{Conclusion}

In this paper, we proposed a real-time self-triggered strategy to aperiodically implement a continuous-time dynamics that solves continuously time-varying convex optimization problems. The sampling times are autonomously chosen by the algorithm to ensure asymptotic convergence to the optimal solution while keeping the number of updates at the minimum. We illustrated the effectiveness of the proposed method with numerical simulations. In future work, we will consider the case where the term $\nabla_{\bx t} f(\bx,t)$ in \eqref{eq: descent_vector_field} is not available, and needs to be estimated with backward difference in time.

\appendix
\section{Appendix}

\subsection{Proof of Lemma \ref*{lem: trigerring function}} \label{proof:em: trigerring function}

	We begin by fixing $k \in \mathbb{Z}_{+}$ and analyzing the Lyapunov function during the inter-event time $t \in [t_k,t_{k+1})$. We aim to find a tight upper bound on $\dot{V}(t)$. First, we write $\dot{V}(t)$ in integral form as
	\begin{align}
	\dot{V}(t) = \dot{V}(t_k) + \int_{t_k}^{t} \ddot{V}(\sigma) d\sigma, \ t\geq t_k.
	\end{align}
	Applying Jensen's gives us the inequality
	\begin{align} \label{eq: V_dot integral inequality}
	\dot{V}(t) \leq \dot{V}(t_k) + \int_{t_k}^{t} |\ddot{V}(\sigma)| d\sigma, \ t\geq t_k.
	\end{align}
	The main idea is then to bound $|\ddot{V}(t)|$ on $t\geq t_k$. By adopting the notation $\bg_k(t):=\nabla_{\bx} f_0(\bx(t),t)$, we can rewrite the Lyapunov function as
	\begin{align} \label{eq: Lyapunov function 2}
	V(t) = \dfrac{1}{2} \bg_k(t)^\top \bg_k(t),   \quad  t \geq t_k.
	\end{align}
	By \eqref{eq: discretized dynamics}, we have that $\dot{\bx}(t)=\dot{\bx}(t_k)=\bh(\bx(t_k),t_k)$ for $t\geq t_k$, and therefore, $\bx(t)=\bx(t_k)+\dot{\bx}(t_k)(t-t_k)$ for $t \geq t_k$. Whence, $\bg_k(t)$ reads as
	\begin{align} \label{eq: g_k}
	\bg_k(t)&=\nabla_{\bx} f_0(\bx(t_k)+\dot{\bx}(t_k)(t-t_k),t),\ t \geq t_k.
	\end{align}
	We can write the first two time derivatives of $V(t)$ from \eqref{eq: Lyapunov function 2} as follows,
	\begin{align} \label{eq: V_dot}
	\dot{V}(t) &= \bg_k(t)^\top \dot{\bg}_k(t), &\ t \geq t_k, \nonumber \\
	\ddot{V}(t) &= \dot{\bg}_k(t)^\top \dot{\bg}_k(t) + \bg_k(t)^\top \ddot{\bg}_k(t), &t \geq t_k.
	\end{align}
	In order to bound $\ddot{V}(t)$, we proceed to bound $\bg_k(t)$, $\dot{\bg}_k(t)$, and $\ddot{\bg}_k(t)$, using the known upper bounds granted by Assumption \ref{bounds on second-order derivatives}. We use chain rule to derive $\dot{\bg}_k(t)$ from \eqref{eq: g_k} as follows,
	\begin{align} \label{eq: g_dot}
	\dot{\bg}_k(t) = \nabla_{\bx\bx} f_0(\bx(t),t) \dot{\bx}(t_k) + \nabla_{\bx t} f_0(\bx(t),t),\ t \geq t_k.
	\end{align}
	We now use Assumption \ref{bounds on second-order derivatives}-\emph{(i)} to find an upper bound on $\|\dot{\bg}_k(t)\|_2$ as follows, 
	\begin{align} \label{eq: upperbound g_dot}
	\|\dot{\bg}_k(t)\|_2 &\leq \|\nabla_{\bx\bx} f_0(\bx(t),t)\|_2\| \dot{\bx}(t_k)\|_2 + \| \nabla_{\bx t} f_0(\bx(t),t)\|_2, \nonumber \\
	& \leq C_{xx} \| \dot{\bx}(t_k)\|_2 + C_{xt}, \nonumber \\ 
	& = a_k.
	\end{align}
	where we have used the definition of $a_k$ in \eqref{eq: 2nd order coefficients}. We apply the chain rule again on \eqref{eq: g_dot} to get 
	\ifx Notice that $\dot{\bg}_k(t_k)=-\alpha \nabla_{\bx} f_0(\bx(t_k),t_k)$. Therefore, we get $\dot{V}(t_k) =\bg_k(t_k)^\top \dot{\bg}_k(t_k) = -\alpha \|\nabla_{\bx} f_0(\bx(t_k),t_k)\|_2^2<0$. \fi 
	\begin{align}
	\ddot{\bg}_k(t) = &\left(\sum_{i=1}^{n} \nabla_{\bx\bx x_i} f_0(\bx(t),t)\dot{\bx}_i(t)\right) \dot{\bx}(t_k)\nonumber +\nabla_{\bx tt} f_0(\bx(t),t) \\  &+ 2\nabla_{\bx \bx t} f_0(\bx(t),t)\dot{\bx}(t_k).
	\end{align}
	We use Assumption \eqref{bounds on second-order derivatives}-\emph{(ii)} to bound $\ddot{\bg}_k(t)$. The first term in $\ddot{\bg}_k(t)$ can be bounded as follows,
	\begin{align*}
	& \| \sum_{i=1}^{n} \nabla_{\bx\bx \bx_i} f_0(\bx(t),t)\dot{\bx}_i(t_k) \|_2 \\ &\leq \sum_{i=1}^{n}  \| \nabla_{\bx\bx \bx_i} f_0(\bx(t),t)\dot{\bx}_i(t_k) \|_2 \\
	& \leq  \sum_{i=1}^{n}  \| \nabla_{\bx\bx \bx_i} f_0(\bx(t),t)\|_2 |\dot{\bx}_i(t_k)| \\ & \leq \sum_{i=1}^{n}  C_{xxx} |\dot{\bx}_i(t_k)| \\ &= C_{xxx} \|\dot{\bx}(t_k)\|_1.
	\end{align*}
	The second and third term in $\dot{\bg}_k(t)$ can also be bounded as follows,
	\begin{align*}
	&\|2\nabla_{\bx \bx t} f_0(\bx(t),t)\dot{\bx}(t_k)+\nabla_{\bx tt} f_0(\bx(t),t)\| \\ & \leq 
	2 \|\nabla_{\bx \bx t} f_0(\bx(t),t)\|_2 \|\dot{\bx}(t_k)\|_2 + \|\nabla_{\bx tt} f_0(\bx(t),t)\|_2, \\
	& \leq 2C_{xxt} \|\dot{\bx}(t_k)\|_2 + C_{xtt}.
	\end{align*}
	Putting the last two bounds together, we obtain
	\begin{align} \label{eq: upperbound g_ddot}
	\|\ddot{\bg}_k(t)\|_2 &\leq  \left(C_{xxx} \|\dot{\bx}(t_k)\|_1 + 2C_{xxt}\right) \|\dot{\bx}(t_k)\|_2+ C_{xtt}, \nonumber \\
	&=b_k.
	\end{align}
	where we have used the definition of $b_k$ in \eqref{eq: 2nd order coefficients}.  To bound $\|\bg_k(t)\|_2$, we use Taylor's theorem to expand $\bg_k(t)$ as
	\begin{align*}
	\bg_k(t) = \bg_k(t_k) + \dot{\bg}_k(\xi) (t-t_k),\ t \geq t_k.
	\end{align*}
	for some $t_k \leq \xi < t$. Using the bound $\|\dot{\bg}_k(t)\| \leq a_k$ from \eqref{eq: upperbound g_dot}, we can bound the last result as follows,
	\begin{align} \label{eq: upperbound g_k}
	\|\bg_k(t)\|_2 &\leq \|\bg_k(t_k) \|_2 + \|\dot{\bg}_k(\xi)\|_2 (t-t_k), \nonumber \\
	& \leq \sqrt{2V(t_k)} + a_k (t-t_k).
	\end{align}
	where by \eqref{eq: Lyapunov function 2}, $\|\bg_k(t_k) \|_2=\sqrt{2V(t_k)}$. We substitute all the upper bounds in \eqref{eq: upperbound g_dot}, \eqref{eq: upperbound g_ddot}, and \eqref{eq: upperbound g_k} back in $\ddot{V}(t)$ to write 
	\begin{align} \label{eq: upperbound V_ddot}
	|\ddot{V}(t)| &= \|\dot{\bg}_k(t)^\top \dot{\bg}_k(t) + \bg_k(t)^\top \ddot{\bg}_k(t)\|_2 \\
	& \leq \|\dot{\bg}_k(t)\|_2^2+\|\bg_k(t)\|_2 \|\ddot{\bg}_k(t)\|_2 \nonumber \\
	& \leq a_k^2 + b_k \sqrt{2V(t_k)} + a_k b_k (t-t_k).\nonumber 
	\end{align}
	Plugging the bound \eqref{eq: upperbound V_ddot} back in \eqref{eq: V_dot integral inequality}, evaluating the integral, and using the definition of $\phi_k(t)$ in \eqref{eq: second order triggering function} would let us to conclude that
	\begin{align}
	\dot{V}(t) \leq \phi_k(t),\ t\geq t_k.
	\end{align}
	The proof is complete. \oprocend

	\subsection{Proof of Lemma \ref*{lem: trigerring function third order}} \label{proof:lem: trigerring function third order}
	
	We follow the same logic as  the proof of Lemma \ref*{lem: trigerring function} to find a tight upper bound for $\dot{V}(t)$ by bounding $\ddot{V}(t)$. Notice that if we do not know the bounds in Assumption \ref{bounds on second-order derivatives}-\emph{(i)}, the bound $\|\ddot{\bg}_k(t)\|_2 \leq b_k$ in \eqref{eq: upperbound g_ddot} still holds, whereas the bounds for $\| \dot{\bg}_k(t)\|_2$ in \eqref{eq: upperbound g_dot} and $\|\dot{\bg}_k(t)\|_2$ in \eqref{eq: upperbound g_k} break. To find legitimate bounds for the latter functions, we use Taylor's theorem to express $\dot{\bg}_k(t)$ as follows,
		\begin{align} \label{eq: g_dot mean value}
		\dot{\bg}_k(t) = \dot{\bg}_k(t_k) + \ddot{\bg}_k(\eta)(t-t_k).
		\end{align}
		for some $t_k < \eta < t$. By \eqref{eq: upperbound g_ddot} we know that $\|\ddot{\bg}_k(t)\|_2 < b_k$ for $t \geq t_k$. Hence, we can bound $\|\dot{\bg}_k(t)\|_2$ as
		\begin{align} \label{eq: upperbound g_dot 2}
		\|\dot{\bg}_k(t)\|_2 &\leq \|\dot{\bg}_k(t_k)\|_2 + b_k(t-t_k). \nonumber
		\end{align}
		We use Taylor's theorem one more time to express $\bg_k(t)$ as 
		\begin{align}
		\bg_k(t) = \bg_k(t_k) + \dot{\bg}_k(t_k)(t-t_k) + \frac{1}{2} \ddot{\bg}_k(\xi) (t-t_k)^2.
		\end{align}
		for some $t_k < \xi < t$. Therefore, we can bound $\|\bg_k(t_k)\|_2$ as follows,
		\begin{align}
		\|\bg_k(t)\|_2 \leq \|\bg_k(t_k)\|_2 + \|\dot{\bg}_k(t_k)\|_2(t-t_k) + \frac{1}{2} b_k (t-t_k)^2.
		\end{align}
		We use the obtained bounds for $\|\bg_k(t)\|_2$ and $\|\dot{\bg}_k(t)\|_2$ to bound $|\ddot{V}(t)|$ as follows,
		\begin{align*}
		|\ddot{V}(t)| &= \|\dot{\bg}_k(t)^\top \dot{\bg}_k(t) + \bg_k(t)^\top \ddot{\bg}_k(t)\|_2 \\
		& \leq \left(\|\dot{\bg}_k(t_k)\|_2 + b_k(t-t_k)\right)^2 \\ &+ (\|\bg_k(t_k)\|_2 + \|\dot{\bg}_k(t_k)\|_2(t-t_k) + \frac{1}{2} b_k (t-t_k)^2)b_k. \nonumber
		\end{align*}
		\ifx
		We define $\dot{u}_k(t):=\alpha \sqrt{2V(t_k)}+ b_k(t-t_k)$ to express the last inequality in compact form as $\|\dot{\bg}_k(t)\|_2 \leq \dot{u}_k(t)$. We now bound $\|\bg_k(t)\|_2$ by integrating  \eqref{eq: g_dot mean value}, using Jensen's inequality, and substituting the bound \eqref{eq: upperbound g_dot 2} as follows,
		\begin{align}
		\|{\bg}_k(t)\|_2 &\leq \|{\bg}_k(t_k)\|_2 + \int_{t_k}^{t} \|\ddot{\bg}_k(\sigma)\|_2 d \sigma \nonumber \\
		&\leq \|\bg_k(t_k)\|_2 + \int_{t_k}^{t} \dot{u}_k(\sigma) d \sigma \nonumber \\
		&= \sqrt{2V(t_k)}+ u_k(t)-u_k(t_k).
		\end{align} 
		We substitute all the bounds back in $\ddot{V}(t)$ to write 
		\begin{align} \label{eq: upperbound V_ddot third order}
		|\ddot{V}(t)| &= \|\dot{\bg}_k(t)^\top \dot{\bg}_k(t) + \bg_k(t)^\top \ddot{\bg}_k(t)\|_2 \\
		& \leq \|\dot{\bg}_k(t)\|_2^2+\|\bg_k(t)\|_2 \|\ddot{\bg}_k(t)\|_2 \nonumber \\
		& \leq \dot{u}_k^2(t)+b_k (\sqrt{2V(t_k)}+ u_k(t)-u_k(t_k)).\nonumber 
		\end{align}
		\fi
		Notice that $\|\bg_k(t_k)\|_2=\sqrt{2V(t_k)}$ and $\|\dot{\bg}(t_k)\|_2=\alpha\sqrt{2V(t_k)}$. Finally, we plug the last bound in \eqref{eq: V_dot integral inequality} and use the definition of $\phi_k(t)$ in \eqref{eq: third order triggering function} to conclude that
		\begin{align}
		\dot{V}(t) \leq \phi_k(t),\ t\geq t_k.
		\end{align}
		The proof is complete. \oprocend
	
	\subsection{Proof of Theorem \ref{thm: convergence of second order}} \label{proof:thm: convergence of second order}

		We saw in the proof of Lemma \ref{lem: trigerring function} that for $t_k \leq t < t_{k+1}$, the dynamics of the Lyapunov function satisfies
		\begin{align*}
		\dot{V}(t) \leq \phi_k(t) < \phi_k(t_{k+1}) = 0, \ t_k \leq t < t_{k+1}.
		\end{align*}
		Moreover, $\phi_k(t)$ is convex on $t_k \leq t \leq t_{k+1}$ with boundary values $\phi_k(t_k)=-2\alpha V(t_k)$ and  $\phi_k(t_{k+1})=0$. Hence, we can write
		\begin{align*}
		\phi_k(t) &\leq \left(1- \frac{t-t_k}{t_{k+1}-t_k}\right) \phi_k(t_k)+\left(\frac{t-t_k}{t_{k+1}-t_k}\right) \phi_k(t_{k+1}) \\
		& = \left(1- \frac{t-t_k}{t_{k+1}-t_k}\right) . (-2\alpha V(t_k)).
		\end{align*}
		Therefore, we get the inequality
		\begin{align*}
		\dot{V}(t) \leq \left(1- \frac{t-t_k}{t_{k+1}-t_k}\right) (-2\alpha V(t_k)).
		\end{align*}
		We integrate the above inequality to obtain
		\begin{align}
		V(t_{k+1}) - V(t_{k}) &\leq -\alpha V(t_k) (t_{k+1}-t_k).
		\end{align}
		Moreover, by Remark \ref{rem: sampling times}, For any $k \in \mathbb{Z}_{+}$, the step size $t_{k+1}-t_k$ is strictly positive unless $V(t_k)=0$. In other words, the right-hand side of the above inequality is strictly negative unless $V(t_k)=0$. Therefore, we must have that $\lim\limits_{k \to \infty} V(t_k)=0$. The proof is complete. \oprocend
		
		\subsection{Proof of Theorem \ref{lem: finite time convergence}} \label{proof:thm: No Zeno}
		We provide the proof when the triggering function is given by \eqref{eq: second order triggering function}. For the other triggering function in \eqref{eq: third order triggering function}, the proof follows the same logic and hence, omitted for the sake of brevity.
		
		We first show that for any $k$ with $V(t_k) \geq \epsilon$, we have that $t_{k+1}-t_{k} > \tau_1(\epsilon)$ for some $\tau_1(\epsilon)>0$ to be determined. When $V(t_k) \geq \epsilon$, we are in the first phase of the Algorithm, where we have that $\dot{V}(t) \leq \phi_k(t) < \phi_k(t_{k+1})=0$. Therefore, the Lyapunov function is bounded, and in particular, $V(t_k) < V(t_0)$. It then follows from the definition of $V(t)=V(\bx(t),t)$ in \eqref{eq: Lyapunov function} that, $\|\nabla_{\bx} f_0(\bx(t_k),t_k)\|$ is bounded. This implies that $\|\dot{\bx}(t_k)\|$ is bounded because we have that 
		\[
		\Scale[0.92]{\dot{\bx}(t_k) = -\nabla_{\bbx\bbx}^{-1} f(\bbx(t_k),t_k)
		\Big[\nabla_{\bbx t} f(\bbx(t_k),t_k) + \alpha\nabla_{\bbx} f(\bbx(t_k),t_k)\Big]}.
		\]
		by \eqref{eq: time_varying_newton}. Therefore, $\|\dot{\bx}(t_k)\|_2$ is bounded as
		\[
		\|\dot{\bx}(t_k)\|_2 \leq \frac{1}{m} 	\Big(C_{x t}+ \alpha \|\nabla_{\bbx} f(\bbx(t_k),t_k)\|_2\Big).
		\]
		where we have used the fact that \emph{(i)} $\nabla_{\bx\bx} f(\bx,t) \geq m \mathbf{I}_n$ for all $\bx \in \mathbb{R}^n$ and $t \in \mathbb{R}_{+}$ (see Assumption \ref{strong convexity}); and \emph{(ii)} $\|\nabla_{\bx t} f(\bx,t)\|_2 \leq C_{xt}$ (see Assumption \ref{bounds on second-order derivatives}). Boundedness of $\|\dot{\bx}(t_k)\|_2$ further implies that  the coefficients of the triggering function ($a_k$ and $b_k$ in \eqref{eq: 2nd order coefficients}) are bounded. Thus, $\sup_{k} a_k < a$ and $\sup_{k} b_k<b$ for some $a,b<\infty$. For any fixed $\tau\geq 0$, $\phi_k(t_k+\tau)$ is increasing in $a_k$ and $b_k$. Therefore, $\phi_k(t_k+\tau)$ given in \eqref{eq: second order triggering function} is bounded by
		\begin{align} \label{ap: phi upper bound}
		\phi_k(t_k+\tau) \leq \dfrac{1}{2} ab \tau^2 + (a^2+b \sqrt{2V(t_0)}) \tau -2\alpha \epsilon.
		\end{align}
		where we used the bounds $\epsilon \leq V(t_k)\leq V(t_0)$, $a_k \leq a$, and $b_k \leq b$. The polynomial in the right-hand side has a strictly positive root. Call it $\tau_1(\epsilon)>0$. Therefore, by the substitution $\tau=\tau_1(\epsilon)$ in \eqref{ap: phi upper bound}, we get
		\[
		\phi_k(t_k+\tau_1(\epsilon)) \leq 0 = \phi_k(t_k + \tau_k).
		\]
		where in the second equality, we have used the fact that, according to \eqref{eq: next sampling time}, $\phi_k(t_{k}+\tau_k)=0$, when $V(t_k) \geq \epsilon$. Since $\phi_k(t_k+\tau)$ is an increasing function of its argument, we conclude from the last inequality that
		\[
		0<\tau_1(\epsilon)\leq \tau_{k},\ \mbox{if} \ V(t_k) \geq \epsilon.
		\]
		Ituitively, during the first phase of the Algorithm, the step sizes are lower-bounded by a positive constant, denoted by $\tau_1(\epsilon) > 0$.
		Next, we consider the second phase of the Algorithm where $0\leq V(t_k) \leq \epsilon$. Notice that, in this case, we can bound $\phi_k(t)$ as
		\[
		\phi_k(t_k+\tau) \leq \dfrac{1}{2} ab \tau^2 + (a^2+b \sqrt{2\epsilon}) \tau -2\alpha V(t_k).
		\]
		By integrating the last inequality, and recalling the definition $\psi_k(t_k+\tau)=V(t_k) + \int_{t_k}^{t_k+\tau} \phi_k(\sigma) d\sigma$ in \eqref{eq: psi function}, we obtain the inequality
		\[
		\psi_k(t_k+\tau) \leq \dfrac{1}{6} ab \tau^3 + \frac{1}{2}(a^2+b \sqrt{2\epsilon}) \tau^2 -2\alpha V(t_k)\tau+V(t_k).
		\]
		The right-hand side is an upper bound on the triggering function $\psi_k(t_k+\tau)$. Viewing this bound as a triggering function, it follows that the step size obtained from this upper bound is a \emph{lower} bound on the actual step size obtained by $\psi_k(t_k+\tau)$. More precisely, denote $\tau_k$ as the value of $\tau$ for which the right-hand side of the inequality above is equal to $\epsilon$, i.e., 
		\begin{align} \label{eq: tau'_k}
		\dfrac{1}{6} ab {\tau'_k}^3 + \frac{1}{2}(a^2+b \sqrt{2\epsilon}) {\tau'_k}^2 -2\alpha V(t_k)\tau'_k+V(t_k)=\epsilon.
		\end{align}
		It then follows that $\tau'_k < \tau_k$ where $\tau_k$ is the actual step size that satisfies $\psi_k(t_k+\tau_k)=\epsilon$. Viewing $\tau'_k$ as a function of $\eta:=V(t_k)$ given by the implicit equation above, we wish to find a lower bound on $\tau'_k(\eta)$ when $0\leq \eta\leq \epsilon$. For this purpose, we differentiate the last equation with respect to $\eta$ to obtain
		\[
		\dfrac{1}{2} ab {\tau'_k}^2 \frac{d \tau'_k}{d\eta} + (a^2+b\sqrt{2\epsilon}) \tau'_k \frac{d \tau'_k}{d\eta}-2\alpha\tau'_k-2\alpha \eta \frac{d \tau'_k}{d\eta}+1=0.
		\]
		By setting $d\tau'_k/d\eta=0$ in the last equation, we get the critical value ${\tau_2}:=(2\alpha)^{-1}$. Next, we evaluate $\tau'_k$ for boundary values $\eta=0$, and $\eta=\epsilon$. For $\eta=0$ we obtain from \eqref{eq: tau'_k} that
		\begin{align*}
		\dfrac{1}{6} ab {\tau'_k}^3 + \frac{1}{2}(a^2+b \sqrt{2\epsilon}) {\tau'_k}^2-2\alpha \epsilon\tau_k=0.
		\end{align*}
		The above polynomial has one zero root (ignored by the Algorithm) and a unique positive root, denoted by $\tau_3(\epsilon)$. Hence, in this case $\tau'_k=\tau_3(\epsilon)>0$. On the other hand, for $\eta=0$, we obtain from \eqref{eq: tau'_k} that
		\[
		\dfrac{1}{6} ab {\tau'_k}^3 + \frac{1}{2}(a^2+b \sqrt{2\epsilon}) {\tau'_k}^2-\epsilon.
		\]
		The above polynomial has also a unique positive root, denoted by $\tau_4(\epsilon)>0$. Therefore, it follows that $$\tau'_k \geq \min \{\tau_2, \ \tau_3(\epsilon),\ \tau_4(\epsilon)\}>0.$$ 
		Finally, recall that $\tau'_k$ is a lower bound on $\tau_k$, the selected step size. Hence,
		$$\tau_k \geq \min \{\tau_2, \ \tau_3(\epsilon),\ \tau_4(\epsilon)\}>0.$$
		This confirms that for the case $0 \leq V(t_k) \leq \epsilon$, the step size is strictly lower bounded by a positive function of $\epsilon$. Hence, the proof is complete. \oprocend

\bibliographystyle{IEEEtran}
\bibliography{Refs}

\end{document}